\documentclass[11pt,a4paper]{amsart}

\usepackage[T1]{fontenc}
\usepackage{lmodern}
\usepackage{microtype}
\usepackage{amsmath,amssymb,amsthm,mathtools}
\usepackage[a4paper,margin=1.12in]{geometry}
\usepackage[hidelinks]{hyperref}
\hypersetup{
  pdftitle={Cofinal towers of hyperbolic link complements with vanishing homology torsion},
  pdfauthor={Qilong Guo},
  pdfsubject={Hyperbolic 3-manifolds and homology torsion},
  pdfkeywords={hyperbolic 3-manifold, cofinal tower, homology torsion, right-angled polyhedron, link complement}
}

\newcommand{\HH}{\mathbb H}
\newcommand{\ZZ}{\mathbb Z}
\newcommand{\cF}{\mathcal F}
\newcommand{\Tor}{\operatorname{Tor}}
\newcommand{\vol}{\operatorname{vol}}
\newcommand{\Isom}{\operatorname{Isom}}

\theoremstyle{plain}
\newtheorem{theorem}{Theorem}[section]
\newtheorem{proposition}[theorem]{Proposition}
\newtheorem{corollary}[theorem]{Corollary}

\theoremstyle{remark}
\newtheorem{remark}[theorem]{Remark}

\title[Cofinal towers with vanishing homology torsion]
{Cofinal towers of hyperbolic link complements\\with vanishing homology torsion}

\author{Qilong Guo}
\address{College of Science, China University of Petroleum--Beijing,
Beijing 102249, P. R. China}
\email{guoqilong1984@hotmail.com}

\subjclass[2020]{Primary 57K32; Secondary 20F55, 57K10, 57M10}

\keywords{hyperbolic $3$-manifold, cofinal tower, homology torsion,
right-angled polyhedron, link complement}

\begin{document}

\begin{abstract}
Problem~3.6 in the $\mathrm{K3}$ problem list of Baykur, Kirby and Ruberman asks whether every cofinal tower
\[
  M_0\longleftarrow M_1\longleftarrow M_2\longleftarrow\cdots
\]
of finite covers of a finite-volume hyperbolic $3$-manifold satisfies
\[
  \lim_{n\to\infty}
  \frac{\log|\Tor H_1(M_n;\ZZ)|}{\vol(M_n)}
  =\frac{1}{6\pi}.
\]
We give a negative answer.  For every ideal right-angled polyhedron $P_0$, the checkerboard manifold associated to $P_0$ admits a cofinal tower all of whose levels are hyperbolic link complements in $S^3$.  Thus $\Tor H_1(M_n;\ZZ)=0$ at every level, and the normalized logarithmic homology torsion is identically zero.
\end{abstract}

\maketitle

\section{Introduction}

Problem~3.6 in the $\mathrm{K3}$ problem list of Baykur, Kirby and Ruberman~\cite{BKR} asks whether every cofinal tower of finite covers
\[
  M_0 \longleftarrow M_1 \longleftarrow M_2 \longleftarrow \cdots
\]
of a finite-volume hyperbolic $3$-manifold satisfies
\begin{equation}\label{eq:K3-limit}
  \lim_{n\to\infty}
  \frac{\log |\Tor H_1(M_n;\ZZ)|}{\vol(M_n)}
  =\frac{1}{6\pi}.
\end{equation}
Here cofinal means that, after choosing compatible basepoints, the corresponding nested subgroups of $\pi_1(M_0)$ have trivial intersection.  We show that cofinality alone does not force~\eqref{eq:K3-limit}.

The constant $1/(6\pi)$ comes from the $L^2$-torsion of hyperbolic $3$-space.  Bergeron and Venkatesh conjectured this limit for congruence towers of closed arithmetic hyperbolic $3$-manifolds~\cite{BV}.  L\^e proved the corresponding upper bound for exhaustive nested normal towers.  His result applies more generally to trace-convergent sequences of covers of irreducible $3$-manifolds with empty or toroidal boundary~\cite{Le}.

Computations of \c{S}eng\"un~\cite{SengunBianchi,SengunTetrahedral} and the experiments of Brock and Dunfield~\cite[Section~4]{BD} support the value $1/(6\pi)$ in several arithmetic and nonarithmetic settings.  Brock and Dunfield also produced a Benjamini--Schramm convergent sequence of closed hyperbolic $3$-manifolds for which normalized analytic torsion does not converge to $1/(6\pi)$~\cite{BD}; their sequence is not a tower over a fixed manifold.  Champanerkar and Kofman constructed noncofinal towers of cusped arithmetic manifolds with torsion-growth limit $1/(4\pi)$~\cite{CK}.  The role of the free part of homology and its regulator is discussed in~\cite{BSV}.

Our examples form cofinal towers over a fixed cusped manifold, and every level is a link complement.

\begin{theorem}\label{thm:main}
Let $P_0\subset\HH^3$ be an ideal right-angled polyhedron, and let $M_0=N(P_0)$ be its checkerboard manifold.  There is a cofinal tower
\begin{equation}\label{eq:main-tower}
  M_0 \longleftarrow M_1 \longleftarrow M_2 \longleftarrow \cdots
\end{equation}
such that every bonding map $M_{n+1}\to M_n$ is a regular double cover and every $M_n$ is a hyperbolic link complement in $S^3$.  Moreover,
\[
  \deg(M_n\to M_0)=2^n,
  \qquad
  \Tor H_1(M_n;\ZZ)=0
\]
for every $n$.  In particular,
\[
  \frac{\log |\Tor H_1(M_n;\ZZ)|}{\vol(M_n)}=0
\]
at every level.
\end{theorem}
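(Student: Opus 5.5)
We need to plan a proof. The checkerboard manifold $N(P_0)$: take $P_0$ with a checkerboard coloring of faces (ideal right-angled polyhedra have even-valent, actually 4-valent vertices, so faces are 2-colorable). Glue two copies of $P_0$ along white faces to get $D(P_0)$, then glue the black faces by the identity to get $N(P_0)$, a manifold with cusps. Known result (Chesebro–DeBlois–Wilton, or earlier): $N(P_0)$ is the complement of a link in $S^3$, namely the augmented link or fully augmented link from the 1-skeleton. The natural approach is to use the right-angled reflection group $\Gamma(P_0)$ and its tower of finite-index subgroups coming from doubling along faces, while keeping the link-complement property at each level.

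The plan is as follows. First I will identify $M_0=N(P_0)$ with a link complement in $S^3$. The construction: the faces of $P_0$ come in white and black classes, and filling in the white faces recovers a ball whose boundary sphere carries the black structure, so that $N(P_0)$ is $S^3$ minus a link (a fully augmented link associated to the nerve graph). Next, for the tower, I will use that the universal right-angled group $\Gamma=\Gamma(P_0)$ is residually finite, and in fact residually $2$-finite. Indeed it is virtually special, and the right-angled Coxeter group itself has a filtration by subgroups of index $2$ given by the kernels of homomorphisms to $\ZZ/2$. The concrete device is to double $P_0$ repeatedly across a face to get larger right-angled ideal polyhedra $P_{n}=P_{n-1}\cup r(P_{n-1})$. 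Each $P_n$ is again ideal right-angled, and $N(P_n)$ is a double cover of $N(P_{n-1})$ when the doubling face is chosen compatibly with the coloring. Since $N(P_n)$ is a checkerboard manifold of an ideal right-angled polyhedron, step one gives that it is a link complement.

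The key steps, in order, are these. (1) Verify that $N(P)\to N(P')$ is a regular double cover whenever $P$ is the double of $P'$ along a face; regularity is automatic for index $2$. This needs the colorings to match after doubling, which holds because reflecting across a face preserves the coloring of adjacent faces, and the faces adjacent to the mirror merge consistently. (2) Check that each $M_n$ is hyperbolic of finite volume, with degree $2^n$; this follows by construction. (3) Deduce $\Tor H_1=0$ from the link-complement property via Alexander duality, which gives $H_1(S^3\setminus L)\cong\ZZ^{|L|}$. (4) Prove cofinality.

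I expect step (4) to be the main obstacle. The fundamental groups correspond to a decreasing chain $\pi_1(M_n)\subset\pi_1(M_0)\subset\Gamma$, and we need the intersection to be trivial. My first attempt is to choose the doubling faces by an enumeration that cycles through all faces at increasing distance from a base point. The aim is for the fundamental polyhedra $P_n$ to contain balls of radius tending to infinity around a fixed interior point. In that case the injectivity radius at the basepoint is unbounded, so any nontrivial element eventually fails to lie in $\pi_1(M_n)$. Concretely, $P_n$ is the union of $2^n$ copies of $P_0$ forming a convex region in $\HH^3$ (convex because the dihedral angles are right angles). Repeatedly doubling across all its faces in turn exhausts $\HH^3$, with every point at distance $\le R$ eventually interior. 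The remaining subtlety is that $\pi_1(N(P_n))$ is generated by face pairings of two copies of $P_n$. Displacement of the basepoint by any nontrivial element is then at least roughly twice the inradius of $P_n$ at that point, which tends to infinity. This gives $\bigcap_n\pi_1(M_n)=1$.
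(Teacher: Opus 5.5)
Your outline follows the paper's argument except for cofinality: each $N(P_n)$ is a link complement, the checkerboard homomorphism is compatible with face doubling, and Alexander duality kills the torsion. For cofinality the paper simply cites Gir\~ao's theorem that some sequence of reflection doubles has $\bigcap_n W(P_n)=\{1\}$, and then uses $\Gamma(P_n)\le W(P_n)$. You instead try to prove this directly, by making the inradius $r_n$ of $P_n$ at a fixed $x\in\operatorname{int}P_0$ tend to infinity.

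That route is viable and would make the proof self-contained. Your displacement estimate is correct: if $B(x,r_n)\subset P_n$ and $1\neq g\in W(P_n)$, then $\operatorname{int}P_n\cap g(\operatorname{int}P_n)=\emptyset$ forces $d(x,gx)\geq 2r_n$. Phrase it through $\pi_1(M_n)=\Gamma(P_n)\le W(P_n)$, with $P_n$ a fundamental chamber for $W(P_n)$. The reason is that $N(P)$ consists of four copies of $P$, not two. Two copies glued by the identity along all faces give the orbifold double, with cone angle $\pi$ along every edge.

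As written, though, the sentence carrying all the weight is only asserted: ``repeatedly doubling across all its faces in turn exhausts $\HH^3$.'' This is exactly what the paper outsources to Gir\~ao. It can be proved as follows.
\begin{itemize}
\item Since $P_n$ is a union of $W(P_0)$-chambers, every face plane of every $P_n$ is a wall of the $W(P_0)$-tessellation. Walls form a locally finite family, so only finitely many meet $\overline{B}(x,R)$.
\item After doubling across a face with plane $H$, the polyhedron $P_{n+1}$ has interior points on both sides of $H$. So does every later $P_m\supset P_{n+1}$, so by convexity $H$ is never again a face plane.
\item $B(x,R)\subset P_n$ if and only if all face planes of $P_n$ are at distance $\geq R$ from $x$.
\item Use the rule ``double across a face whose plane is nearest to $x$.'' At each step with $r_n\leq R$ it crosses a new wall meeting $\overline{B}(x,R)$. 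This can happen only finitely often, so $r_n\to\infty$.
\end{itemize}

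This argument needs the freedom to double across any face. So drop your hedge about choosing the face ``compatibly with the coloring''; no condition is needed. Since the target is abelian, $\epsilon_P(sr_Gs)=\epsilon_P(r_G)$. Hence $\epsilon_{P'}=\epsilon_P|_{W(P')}$ for every face, and $[\Gamma(P):\Gamma(P')]=2\cdot 4/4=2$.

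Finally, your step one must hold for every $P_n$, and these typically have non-triangular faces of both colors. So the fully augmented link picture is not the right input. What you need is Erokhovets's theorem: $N(P)\cong S^3\setminus C_\gamma$ for a nonselfcrossing Eulerian cycle $\gamma$, valid for every ideal right-angled polyhedron.
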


The argument combines Gir\~ao's cofinal sequences of reflection groups obtained by face doubling~\cite{Girao} with Erokhovets's realization of a four-copy checkerboard manifold as a hyperbolic link complement~\cite{Erokhovets}.  Section~\ref{sec:checkerboard} gives the checkerboard construction in reflection-group language.  Section~\ref{sec:doubling} proves that it is compatible with face doubling, and Section~\ref{sec:tower} applies this compatibility to Gir\~ao's sequence.

\section{Checkerboard manifolds}\label{sec:checkerboard}

We use Erokhovets's four-copy construction in a form suited to
reflection doubling.  Let $P\subset\HH^3$ be an ideal right-angled
polyhedron.  The link of each ideal vertex is a Euclidean right-angled
polygon, hence a quadrilateral.  Thus the $1$-skeleton of $P$ is a
plane $4$-valent graph, and its faces admit a checkerboard coloring
\begin{equation}\label{eq:checkerboard-coloring}
  \cF(P)=\cF_B(P)\sqcup\cF_W(P),
\end{equation}
with adjacent faces of different colors.

For a face $F$ of $P$, let $r_F$ denote reflection in its supporting
plane.  By the Poincar\'e polyhedron theorem,
\[
  W(P)=\langle r_F:F\in\cF(P)\rangle<\Isom(\HH^3)
\]
is discrete and has $P$ as a fundamental chamber.  Its Coxeter
presentation is
\begin{equation}\label{eq:Coxeter-presentation}
  W(P)=\left\langle
  r_F\ (F\in\cF(P))
  \ \middle|\
  r_F^2=1,\ [r_F,r_G]=1\text{ if }F\cap G\text{ is an edge}
  \right\rangle;
\end{equation}
see~\cite[Chapter~6]{Davis}.

We shall also use the following standard chamber--index fact.  If
$H\leq G$ are reflection groups and a fundamental chamber for $H$ is
the union of $k$ $G$-chambers, then
\[
  [G:H]=k.
\]
Indeed, the $H$-orbits of the $G$-chambers are naturally indexed by
the cosets in $H\backslash G$; see
\cite[Section~5.1 and Theorem~6.4.3]{Davis}.

Write
\[
  (\ZZ/2\ZZ)^2=\{(0,0),(1,0),(0,1),(1,1)\}
\]
additively, and set
\[
  e_B=(1,0),\qquad e_W=(0,1).
\]
Define
\begin{equation}\label{eq:checkerboard-map}
  \epsilon_P:W(P)\longrightarrow(\ZZ/2\ZZ)^2,
  \qquad
  \epsilon_P(r_F)=
  \begin{cases}
    e_B,&F\in\cF_B(P),\\
    e_W,&F\in\cF_W(P).
  \end{cases}
\end{equation}
The Coxeter relations are respected, and both colors occur, so
$\epsilon_P$ is surjective.  Put
\begin{equation}\label{eq:checkerboard-kernel}
  \Gamma(P)=\ker\epsilon_P.
\end{equation}

Let $x$ lie in the relative interior of an edge $F\cap G$.  Exactly
four $W(P)$-chambers meet at $x$, and
\[
  \operatorname{Stab}_{W(P)}(x)
  =\langle r_F,r_G\rangle
  \cong(\ZZ/2\ZZ)^2.
\]
Since $F$ and $G$ have different colors, $r_F$ and $r_G$ map to the
two standard basis vectors.  Hence the restriction of $\epsilon_P$
to this stabilizer is an isomorphism.

\begin{proposition}\label{prop:kernel-manifold}
The group $\Gamma(P)$ is discrete, torsion-free, and orientation
preserving, and
\[
  [W(P):\Gamma(P)]=4.
\]
Consequently,
\begin{equation}\label{eq:NP}
  N(P):=\HH^3/\Gamma(P)
\end{equation}
is an orientable complete finite-volume hyperbolic $3$-manifold.
\end{proposition}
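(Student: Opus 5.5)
Discreteness and the index come first. Since $\Gamma(P)\le W(P)$ and $W(P)$ is discrete by the Poincar\'e polyhedron theorem, $\Gamma(P)$ is discrete. The index $[W(P):\Gamma(P)]$ equals the order of the image of $\epsilon_P$. That image is all of $(\ZZ/2\ZZ)^2$, because both colors occur, so the index is $4$.

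For orientation: every generator $r_F$ is orientation reversing, so the orientation character $W(P)\to\ZZ/2\ZZ$ sends each $r_F$ to $1$. This character factors through $\epsilon_P$ via the sum map $(a,b)\mapsto a+b$, since $e_B$ and $e_W$ both map to $1$. An element of $\ker\epsilon_P$ therefore has zero orientation character, so $\Gamma(P)$ is orientation preserving.

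The main step is torsion-freeness. Let $g\in\Gamma(P)$ be a nontrivial element of finite order.
\begin{itemize}
\item A finite-order isometry of $\HH^3$ has a fixed point $y\in\HH^3$ (the barycenter of any orbit). It does not only fix an ideal point, since the group is discrete and we are in the interior.
\item Replacing $y$ by a $W(P)$-translate, and $g$ by a conjugate, we may assume $y\in P$. This is allowed because $\Gamma(P)$ is normal, being a kernel.
\item By the standard structure of reflection groups (Davis, Chapter~6), $\operatorname{Stab}_{W(P)}(y)$ is the special subgroup generated by the reflections in the faces of $P$ containing $y$.
\item Since $P$ is ideal, it has no finite vertices. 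So $y$ lies in the interior of $P$, in the interior of a face $F$, or in the relative interior of an edge $F\cap G$.
\item The corresponding stabilizer is trivial, $\langle r_F\rangle\cong\ZZ/2$, or $\langle r_F,r_G\rangle\cong(\ZZ/2\ZZ)^2$.
\end{itemize}
In the face case, $\epsilon_P(r_F)\neq 0$. In the edge case, the restriction of $\epsilon_P$ is an isomorphism, as already noted before the statement. In every case $\epsilon_P$ is injective on the stabilizer. Hence $g\in\ker\epsilon_P\cap\operatorname{Stab}(y)$ is trivial, a contradiction.

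The fact I expect to need care is that every finite-order element fixes a point whose stabilizer is exactly such a special subgroup. I would cite the Davis reference for this rather than reprove it.

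Finally, $\Gamma(P)$ is discrete and torsion-free, so it acts freely and properly discontinuously on $\HH^3$. Hence $N(P)$ is a complete hyperbolic manifold, and it is orientable because $\Gamma(P)$ preserves orientation. A fundamental domain for $\Gamma(P)$ is the union of $4$ copies of $P$, one for each coset representative. Each copy has finite volume because $P$ is an ideal polyhedron with finitely many faces. Therefore $\vol(N(P))=4\vol(P)<\infty$.
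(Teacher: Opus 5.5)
Your proposal is correct and follows essentially the same route as the paper. You get the index from surjectivity of $\epsilon_P$ and orientation preservation from the orientation character factoring through $\epsilon_P$ via $(a,b)\mapsto a+b$. You prove torsion-freeness by conjugating a fixed point into $\overline P$ and checking that $\epsilon_P$ is injective on the trivial, face and edge stabilizers. The differences are cosmetic: you cite Davis for the special-subgroup form of stabilizers and compute $\vol(N(P))=4\vol(P)$ from a four-chamber fundamental domain, where the paper invokes finite index in a finite-covolume group. Also, the barycenter argument already gives a fixed point in $\HH^3$, so your remark about ideal points and discreteness is unnecessary.
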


\begin{proof}
Surjectivity of $\epsilon_P$ gives
\[
  W(P)/\Gamma(P)\cong(\ZZ/2\ZZ)^2,
  \qquad
  [W(P):\Gamma(P)]=4.
\]
The subgroup $\Gamma(P)$ is discrete because $W(P)$ is discrete.

Let $\omega$ be the orientation character of $W(P)$, and set
$\sigma(a,b)=a+b$.  The homomorphisms $\omega$ and
$\sigma\circ\epsilon_P$ agree on every face reflection, and hence
are equal.  Thus $\Gamma(P)\subset\ker\omega$.

Suppose that $1\neq g\in\Gamma(P)$ has finite order.  Then $g$ is
elliptic and fixes a point $x\in\HH^3$.  Choose $w\in W(P)$ such that
$w(x)\in\overline P$.  Since $\Gamma(P)\triangleleft W(P)$, the
conjugate $wgw^{-1}$ still lies in $\Gamma(P)$; replacing $g$ by this
conjugate, we may assume that $x\in\overline P$.

The stabilizer of an interior point of $P$ is trivial.  A point in
the relative interior of a face $F$ has stabilizer $\langle r_F\rangle$,
which meets $\Gamma(P)$ trivially.  At an edge, the stabilizer is one
of the groups considered above, and the restriction of $\epsilon_P$
to it is injective.  Since all vertices of $P$ are ideal, these exhaust
the possibilities.  This contradicts $g\neq1$, and therefore
$\Gamma(P)$ is torsion-free.

Finally, $\Gamma(P)$ has finite index in the finite-covolume group
$W(P)$.  Hence $N(P)$ is complete and has finite volume; the preceding
arguments show that it is an orientable manifold.
\end{proof}

The quotient $N(P)$ also has a concrete four-copy description.  Label
four abstract copies of $P$ by the elements of $(\ZZ/2\ZZ)^2$.  Across
a face of color $c$, identify the copy labelled $a$ with the copy
labelled $a+e_c$.  Thus
\begin{equation}\label{eq:four-copy}
  N(P)\cong P\times(\ZZ/2\ZZ)^2/\!\sim,
\end{equation}
where
\[
  (p,a)\sim(p,a+e_c)
\]
whenever $p$ lies on a face of color $c$.

The quotient in~\eqref{eq:four-copy} is Erokhovets's checkerboard
manifold~\cite[Construction~5.4]{Erokhovets}.  His equivalence relation
identifies $(p,a)$ and $(p,b)$ whenever $a-b$ lies in the span of the
labels of the faces containing $p$; for the checkerboard labeling this
is precisely the relation generated above, up to interchanging the two
basis vectors.

By~\cite[Proposition~5.1]{Erokhovets}, the $1$-skeleton of $P$ admits a
nonselfcrossing Eulerian cycle $\gamma$.  The associated hyperbolic link
$C_\gamma\subset S^3$ satisfies
\begin{equation}\label{eq:Erokhovets-homeomorphism}
  S^3\setminus C_\gamma\cong N(P)
\end{equation}
by~\cite[Proposition~5.5]{Erokhovets}.  If $C_\gamma$ has $c$ components,
Alexander duality yields
\[
  H_1(N(P);\ZZ)\cong\ZZ^c,
\]
and hence
\begin{equation}\label{eq:H1-torsion-free}
  \Tor H_1(N(P);\ZZ)=0.
\end{equation}

\section{Reflection doubling}\label{sec:doubling}
Let \(F\) be a face of \(P\), and put \(s=r_F\).  The
\emph{reflection double} of \(P\) across \(F\) is
\begin{equation}\label{eq:reflection-double}
  P'=P\cup s(P).
\end{equation}
Since every face adjacent to \(F\) meets it orthogonally, the two
copies fit together to form another ideal right-angled Coxeter
polyhedron.  The common face \(F\) lies in the interior of \(P'\)
and is no longer a boundary face.

Let \(G\neq F\) be a face of \(P\).  If \(G\) is adjacent to \(F\),
then \(G\) and \(s(G)\) lie in the same supporting plane and form
a single face of \(P'\); the reflection in this face is \(r_G\).
If \(G\) is not adjacent to \(F\), then \(G\) and \(s(G)\) remain
distinct faces of \(P'\), with reflections
\[
  r_G
  \qquad\text{and}\qquad
  r_{s(G)}=s r_G s,
\]
respectively.  In either case, the new faces inherit the color of
\(G\).

It follows that every face reflection of \(P'\) belongs to
\(W(P)\), and hence
\[
  W(P')\leq W(P).
\]
Moreover, \(P'\) is a fundamental chamber for \(W(P')\), while
\(P'=P\cup s(P)\) is the union of exactly two \(W(P)\)-chambers.
The chamber--index correspondence therefore gives
\begin{equation}\label{eq:W-index-two}
  [W(P):W(P')]=2.
\end{equation}

The inherited coloring defines a homomorphism
\[
  \epsilon_{P'}:W(P')\longrightarrow(\ZZ/2\ZZ)^2
\]
by the same rule as~\eqref{eq:checkerboard-map}.

\begin{proposition}\label{prop:compatibility}
Under the inclusion $W(P')<W(P)$,
\begin{equation}\label{eq:compatibility}
  \epsilon_{P'}=\left.\epsilon_P\right|_{W(P')}.
\end{equation}
Consequently,
\begin{equation}\label{eq:kernel-pullback}
  \Gamma(P')=W(P')\cap\Gamma(P).
\end{equation}
\end{proposition}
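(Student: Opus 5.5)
Since $W(P')$ is generated by the reflections in the faces of $P'$, and both $\epsilon_{P'}$ and $\left.\epsilon_P\right|_{W(P')}$ are homomorphisms defined on $W(P')$, it suffices to check that the two maps agree on each face reflection of $P'$. The preceding discussion lists these reflections completely. Every face of $P'$ arises from a face $G\neq F$ of $P$, in one of two ways. Either it is the merged face $G\cup s(G)$ with reflection $r_G$, when $G$ is adjacent to $F$. Or it is one of $G$, $s(G)$, with reflections $r_G$ and $sr_Gs$, when $G$ is not adjacent to $F$. In each case the face of $P'$ carries the color $c$ of $G$, so $\epsilon_{P'}$ sends its reflection to $e_c$.

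We then compute $\epsilon_P$ on these same elements. For $r_G$ we get $\epsilon_P(r_G)=e_c$ directly from~\eqref{eq:checkerboard-map}. For $sr_Gs$, the target $(\ZZ/2\ZZ)^2$ is abelian, so
\[
  \epsilon_P(sr_Gs)=\epsilon_P(s)+\epsilon_P(r_G)+\epsilon_P(s)=\epsilon_P(r_G)=e_c.
\]
So the two homomorphisms agree on a generating set, which proves~\eqref{eq:compatibility}.

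The one point needing care is whether the inherited coloring is well defined. A merged face $G\cup s(G)$ receives a single color, which is fine because $G$ and $s(G)$ carry the same color by construction. We should also confirm that adjacent faces of $P'$ still have different colors, so that $\epsilon_{P'}$ is the checkerboard map of a genuine checkerboard coloring. This holds because $s$ is an isometry preserving adjacency inside $s(P)$. Across the glued face $F$ itself no new adjacencies appear, since the faces adjacent to $F$ are exactly the ones that merge. For the compatibility identity itself, however, only the color assignment on generators matters, so this check is secondary.

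Finally, taking kernels gives
\[
  \Gamma(P')=\ker\epsilon_{P'}=\ker\left(\left.\epsilon_P\right|_{W(P')}\right)=W(P')\cap\ker\epsilon_P=W(P')\cap\Gamma(P),
\]
which is~\eqref{eq:kernel-pullback}.
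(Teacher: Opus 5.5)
Your proposal is correct and follows the paper's proof essentially step for step. You reduce to the face-reflection generators of $W(P')$, split according to whether $G$ is adjacent to $F$, use commutativity of $(\ZZ/2\ZZ)^2$ to get $\epsilon_P(sr_Gs)=\epsilon_P(r_G)$, and take kernels. Your extra check that the inherited coloring is still a checkerboard coloring is not needed for this proposition, as you note yourself, but it is accurate.
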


\begin{proof}
It is enough to compare the two maps on the face reflections of $P'$.  Let $G\neq F$ be a face of $P$.  If $G$ is adjacent to $F$, then $sr_Gs=r_G$, and the merged face has the color of $G$.  If $G$ is not adjacent to $F$, then $G$ and $s(G)$ have reflections $r_G$ and $sr_Gs$, respectively, and the same color.  Moreover,
\[
  \epsilon_P(sr_Gs)
  =\epsilon_P(s)+\epsilon_P(r_G)+\epsilon_P(s)
  =\epsilon_P(r_G).
\]
Thus the two maps agree on the face-reflection generators of $W(P')$, proving~\eqref{eq:compatibility}.  Taking kernels gives~\eqref{eq:kernel-pullback}.
\end{proof}

\begin{corollary}\label{cor:double-cover}
Reflection doubling induces a regular double cover
\begin{equation}\label{eq:double-cover}
  N(P')\longrightarrow N(P).
\end{equation}
\end{corollary}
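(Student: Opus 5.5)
The idea is to realize the covering map as the natural projection $\HH^3/\Gamma(P')\to\HH^3/\Gamma(P)$. This requires two facts: $\Gamma(P')\leq\Gamma(P)$ with index $2$, and $\Gamma(P')$ normal in $\Gamma(P)$.

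The inclusion $\Gamma(P')\leq\Gamma(P)$ is immediate from \eqref{eq:kernel-pullback}. For the index, I will use \eqref{eq:compatibility}. The map $\epsilon_{P'}$ is surjective, since $P'$ still has faces of both colors: every face $G\neq F$ of $P$ survives in $P'$ with its color. Hence $[W(P'):\Gamma(P')]=4=[W(P):\Gamma(P)]$, and \eqref{eq:W-index-two} gives
\[
  [\Gamma(P):\Gamma(P')]=\frac{[W(P):W(P')]\,[W(P'):\Gamma(P')]}{[W(P):\Gamma(P)]}=\frac{2\cdot4}{4}=2,
\]
computing $[W(P):\Gamma(P')]$ along both chains of subgroups. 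Normality is then automatic, because a subgroup of index two is normal.

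The projection $\HH^3/\Gamma(P')\to\HH^3/\Gamma(P)$ is therefore a covering of degree $2$. Both groups are torsion-free by Proposition~\ref{prop:kernel-manifold}, so both quotients are manifolds, and the projection is a genuine covering of manifolds. Since $\Gamma(P')\triangleleft\Gamma(P)$, the covering is regular, with deck group $\Gamma(P)/\Gamma(P')\cong\ZZ/2\ZZ$. That it is "induced by reflection doubling" is built into the construction: $P'$ is the chamber for $W(P')$, and under the four-copy description \eqref{eq:four-copy} the cover is assembled from eight copies of $P$, two in each of the four copies of $P'$.

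The only step needing care is surjectivity of $\epsilon_{P'}$, since it is what makes the index of $\Gamma(P')$ in $W(P')$ equal to $4$. I expect this to be the main obstacle, though a mild one. The doubling removes only the face $F$, so I need some face of the other color to survive. Any face adjacent to $F$ has the opposite color and is not removed, so this holds.
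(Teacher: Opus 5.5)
Your proposal is correct and follows the paper's proof: the inclusion comes from \eqref{eq:kernel-pullback}, the index is computed as $2\cdot4/4=2$ along the two chains of subgroups, and normality (hence regularity) follows from index two. Your explicit check that $\epsilon_{P'}$ is surjective, because a face adjacent to $F$ survives in $P'$ with the opposite color, fills in a step the paper leaves implicit when it uses $[W(P'):\Gamma(P')]=4$.
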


\begin{proof}
By~\eqref{eq:kernel-pullback}, $\Gamma(P')\leq\Gamma(P)$, and
\[
  [\Gamma(P):\Gamma(P')]
  =\frac{[W(P):W(P')][W(P'):\Gamma(P')]}
         {[W(P):\Gamma(P)]}
  =\frac{2\cdot4}{4}=2.
\]
Hence $\Gamma(P')\triangleleft\Gamma(P)$, and the induced cover is regular.
\end{proof}

\section{The cofinal tower}\label{sec:tower}

We use the following reformulation of Gir\~ao's construction~\cite[Section~5, Theorem~5.1]{Girao}.

\begin{theorem}[Gir\~ao]\label{thm:Girao}
For every ideal right-angled polyhedron $P_0$, there is a sequence
\begin{equation}\label{eq:polyhedron-sequence}
  P_0,P_1,P_2,\ldots
\end{equation}
in which $P_{n+1}$ is a reflection double of $P_n$ and
\begin{equation}\label{eq:W-cofinal}
  \bigcap_{n\geq0}W(P_n)=\{1\}.
\end{equation}
\end{theorem}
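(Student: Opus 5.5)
The plan is to deduce~\eqref{eq:W-cofinal} from an exhaustion property. I will build the doubles so that every $W(P_0)$-chamber is eventually contained in $P_n$, and then show that exhaustion forces the intersection of the groups $W(P_n)$ to be trivial. Throughout I use the facts from Section~\ref{sec:doubling}. Each $P_n$ is an ideal right-angled polyhedron, and in particular it is convex. It is the union of $2^n$ chambers of $W(P_0)$ and contains $P_0$. Each of its faces is a union of faces of these chambers, and $W(P_{n+1})\leq W(P_n)$.

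\emph{Step 1: exhaustion implies cofinality.} Let $1\neq g\in W(P_0)$. Since $W(P_0)$ acts simply transitively on its chambers, $gP_0\neq P_0$. Suppose that for some $n$ both $P_0$ and $gP_0$ lie in $P_n$, and that $g\in W(P_n)$. Then $gP_n$ is a $W(P_n)$-chamber containing $gP_0$. So $gP_n$ and $P_n$ share an open set, which forces $gP_n=P_n$. Because $P_n$ is a fundamental chamber of $W(P_n)$, this gives $g=1$, a contradiction. Hence it suffices to arrange that every chamber $gP_0$ lies in some $P_n$. Since the $P_n$ increase, $g\notin W(P_m)$ for all $m\geq n$, and therefore $\bigcap_n W(P_n)=\{1\}$.

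\emph{Step 2: absorbing one adjacent chamber.} Let $C$ be a $W(P_0)$-chamber not in $P_n$ that shares a codimension-one face $E$ with some chamber $D\subset P_n$. Since $P_n$ is a union of chambers, $E$ lies in $\partial P_n$, inside a face $F_n$ of $P_n$. Let $s=r_{F_n}$. The plane of $F_n$ is the plane of $E$, so $s(D)=C$. Doubling across $F_n$ yields $P_{n+1}=P_n\cup s(P_n)\supset C$.

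\emph{Step 3: exhausting all chambers.} Enumerate the chambers $C_1,C_2,\ldots$ of $W(P_0)$ in order of gallery distance from $P_0$; the chamber graph is connected and locally finite. Process them in order. If $C_k$ is not yet contained in the current polyhedron, pick a gallery from $P_0$ to $C_k$. Every chamber on this gallery lies in the current polyhedron until the gallery first exits it. Apply Step~2 to the first chamber outside, and repeat. Each application strictly enlarges the polyhedron along a finite gallery, so $C_k$ is absorbed after finitely many doublings. Concatenating these finite stages gives the sequence~\eqref{eq:polyhedron-sequence}, and Step~1 then gives~\eqref{eq:W-cofinal}.

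The step I expect to need the most care is Step~2. There one must check that the face $E$ of a chamber on $\partial P_n$ really lies in a single face of $P_n$ whose supporting plane is that of $E$. This uses convexity of $P_n$ together with the fact that $P_n$ is tiled by chambers whose walls lie in reflection planes of $W(P_0)$. I would verify it by induction on $n$ from the description of the faces of a reflection double in Section~\ref{sec:doubling}.
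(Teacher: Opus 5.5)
Your proof is correct, but it does not follow the paper's route. The paper does not prove this statement; it quotes Gir\~ao's Theorem~5.1 and only remarks that his doubly indexed family of doubles can be relabelled as a single sequence. You instead give a self-contained argument:
\begin{itemize}
\item Step~1 reduces cofinality to exhaustion, i.e.\ every $W(P_0)$-chamber eventually lies in some $P_n$. It uses only that distinct $W(P_n)$-chambers have disjoint interiors and that $P_n$ has trivial stabilizer in $W(P_n)$.
\item Steps~2--3 achieve exhaustion greedily along galleries. Each doubling strictly increases the index of the first chamber that leaves the current polyhedron, so every gallery is absorbed after finitely many doublings.
\end{itemize}

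The point you flag in Step~2 needs no induction on the doubling history.
\begin{itemize}
\item Take $x$ in the relative interior of $E$. Local finiteness gives a ball $B$ around $x$ meeting only $D$ and $C$.
\item The interior of $C$ misses $P_n$, so $P_n\cap B=D\cap B$. This is a half-ball bounded by the plane $H_E$ of $E$.
\item Convexity of $P_n$ now puts all of $P_n$ on one side of $H_E$. Indeed, a point of $P_n$ strictly on the other side, joined to $x$ by a geodesic, would give points of $P_n\cap B$ outside $D$.
\item Hence $F_n=P_n\cap H_E$ is a face of $P_n$ containing $E$. Its reflection $r_{F_n}$ is the reflection in $H_E$, which exchanges $D$ and $C$.
\end{itemize}

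The same observation gives the converse of Step~1. Suppose some chamber were never absorbed. A gallery from $P_0$ to it contains adjacent chambers $D\subset P_m$ and $C'$ with $C'$ never absorbed. The reflection in their common wall is then a face reflection of $P_n$ for every $n\ge m$. Since the groups decrease, it is a nontrivial element of $\bigcap_n W(P_n)$. So exhaustion is exactly equivalent to cofinality here.

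Your route gives a complete proof that uses only convexity of the doubles and connectivity of the chamber system, and it leaves the order of doublings entirely free. The citation gives brevity and Gir\~ao's explicit doubling scheme in place of a greedy sequence that depends on the chosen enumeration.
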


Gir\~ao uses a double indexing; relabelling the polyhedra in the order in which the successive doublings are carried out gives the sequence above.

\begin{proof}[Proof of Theorem~\ref{thm:main}]
Choose $P_n$ as in Theorem~\ref{thm:Girao}, and set
\[
  W_n=W(P_n),
  \qquad
  \Gamma_n=\Gamma(P_n),
  \qquad
  M_n=N(P_n).
\]
Proposition~\ref{prop:compatibility} and Corollary~\ref{cor:double-cover} give
\[
  \Gamma_{n+1}=W_{n+1}\cap\Gamma_n,
  \qquad
  [\Gamma_n:\Gamma_{n+1}]=2.
\]
Thus $M_{n+1}\to M_n$ is a regular double cover and
\begin{equation}\label{eq:degree}
  \deg(M_n\to M_0)=2^n.
\end{equation}

Since $\Gamma_n\subset W_n$,
\[
  \bigcap_{n\geq0}\Gamma_n
  \subset
  \bigcap_{n\geq0}W_n
  =\{1\},
\]
so the tower is cofinal.

By~\eqref{eq:H1-torsion-free},
\[
  \Tor H_1(M_n;\ZZ)=0
\]
for every $n$, and therefore
\[
  \frac{\log |\Tor H_1(M_n;\ZZ)|}{\vol(M_n)}=0.
\]

\end{proof}

\begin{remark}\label{rem:scope}
The manifolds in the tower are cusped and have positive first Betti number.  Each bonding map is regular, but the composite covers $M_n\to M_0$ are not asserted to be regular.  Thus the construction does not address closed arithmetic congruence towers or towers that are regular over the base and have vanishing first Betti number.  Gir\~ao proves that the reflection groups in his tower cannot all be normal in the initial reflection group; see~\cite[Section~6]{Girao}.
\end{remark}

\end{document}